\newtheorem{theorem}{Theorem}[section]
\newtheorem{proposition}[theorem]{Proposition}
\theoremstyle{definition}
\newtheorem{definition}[theorem]{Definition}
\newcommand{\FF}{\mathbb F}
\newcommand{\cF}{\mathcal{F}}
\newcommand{\cH}{\mathcal{H}}
\newcommand{\cG}{\mathcal{G}}
\newcommand{\cL}{\mathcal{L}}
\newcommand{\cY}{\mathcal{Y}}
\newcommand{\cZ}{\mathcal{Z}}
\newcommand{\sqbinom}[2]{\begin{bsmallmatrix} #1 \\ #2 \end{bsmallmatrix}}
\newcommand{\lanran}[1]{\left \langle #1 \right\rangle}
\newcommand{\rank}{\text{rank}}
\DeclareMathOperator{\SP}{sp}
\let\epsilon=\varepsilon
\let\phi=\varphi
\begin{document}

\setstretch{1.27}

\title{A note on the induced Ramsey theorem for spaces.}

\author{Bryce Frederickson}

\author{Vojtech R\"{o}dl}

\author{Marcelo Sales}

\thanks{The second and third authors were partially supported by NSF grant DMS 1764385. The second author was also supported by NSF grant DMS 2300347 and the third author was partially supported by US Air Force grant FA9550-23-1-0298.}

\address{Department of Mathematics, Emory University, Atlanta, GA, USA}

\email{\{bfrede4|vrodl\}@emory.edu}

\address{Department of Mathematics, University of California, Irvine, CA, USA}

\email{mtsales@uci.edu}

\dedicatory{Dedicated to the 80th birthday of Bruce Lee Rothschild}

\begin{abstract}
The aim of this note is to give a simplified proof of the induced version of the Ramsey theorem for vector spaces first proved by H. J. Pr\"{o}mel \cite{Pr86}.
\end{abstract}

\maketitle

\section{Introduction}

We consider a Ramsey type result for vector and affine spaces. Let $q$ be a prime power and $\FF_q$ the finite field of order $q$. For our exposition, we will call a vector space or an affine space (translation of a vector space) over $\FF_q$ by \textbf{space}. A space $U$ of rank $k$ will be called a \textbf{$k$-space}. For a space $U$ of rank at least $k$, we denote by $\sqbinom{U}{k}$ the set of all $k$-subspaces of $U$. By the relation
\begin{align*}
    N \rightarrow  (n)_r^{k,\SP}
\end{align*}
we denote the fact that for any coloring of the $k$-subspaces of an $N$-space $V$ with $r$ colors, there exists a $n$-subspace $U \subseteq V$ such that $\sqbinom{U}{k}$ is monochromatic, i.e., any $k$-subspace of $U$ has the same color.


In \cite{Rota70}, Rota conjectured that a version of Ramsey's theorem \cite{R29} holds for vector spaces. Graham and Rothschild proved in \cite{GR71param} a Ramsey theorem for $n$-parameter sets, which in particular implies the $1$-dimensional case of Rota's conjecture (Corollary 2, \cite{GR71param}). Together with Leeb, they fully settled the conjecture in \cite{GLR72} by proving a more general Ramsey theorem for a class of categories that includes structures as vector spaces, affine spaces and projective spaces. A simpler proof was given later by Spencer \cite{Sp79}.


\begin{theorem}[\cites{GLR72, Sp79}]\label{th:vramsey}
For integers $k,n, r$ with $0\leq k \leq n$, there exists $N_0:=N_0(k,n,r)$ such that if $N\geq N_0$, then $N\rightarrow (n)^{k,\SP}_r$ holds.
\end{theorem}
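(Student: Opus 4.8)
The plan is to prove the statement by induction on $k$, holding $n$ and $r$ free throughout, with the Hales--Jewett theorem as the engine of the base case. It is convenient to treat the affine case, where the induction has the cleanest base; the vector-space case is handled by the same scheme (and the two are in any event equivalent via standard projectivisation, a vector $k$-subspace corresponding to a transverse affine $(k-1)$-flat in an affine chart). So I aim to show: for all $k\le n$ and $r$ there is $N_0$ such that, for $N\ge N_0$, every $r$-colouring of $\sqbinom{V}{k}$ for an affine $N$-space $V$ admits an affine $n$-subspace $U$ with $\sqbinom{U}{k}$ monochromatic, which is exactly $N\rightarrow(n)^{k,\SP}_r$.

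For the base case $k=0$ the coloured objects are the points of $V\cong\FF_q^N$. Identifying $\FF_q$ with an alphabet of $q$ letters, a combinatorial $n$-subspace in the Hales--Jewett sense --- fix some coordinates to constants and partition the remaining ones into $n$ nonempty classes along which the coordinate is held constant --- is precisely an affine $n$-subspace of $\FF_q^N$. Hence the subspace form of the Hales--Jewett theorem, applied with alphabet $\FF_q$, $n$ free classes, and $r$ colours, yields for $N$ large a combinatorial, hence affine, $n$-subspace all of whose points share a colour. This is the conclusion for $k=0$.

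For the inductive step I assume the result at level $k-1$ for all $n$ and $r$. The core is an amalgamation whose goal is to recolour so that the colour of a $k$-space $K$ is governed by lower-rank data, to homogenise that data using the induction hypothesis, and then to lift. Concretely, one fixes a large scaffold $N$-space, views each $k$-space as a $(k-1)$-space together with one transverse affine direction, and passes --- through a Hales--Jewett/product argument over the enlarged alphabet whose letters encode the colouring restricted to a block --- to a subconfiguration on which the colour of $K$ depends only on its $(k-1)$-skeleton together with a bounded residue. Applying the level-$(k-1)$ hypothesis then homogenises the skeleton, and a final pigeonhole on the bounded residue delivers the monochromatic $n$-space.

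I expect the inductive step, and within it the stabilisation just described, to be the main obstacle: one must force enough $k$-spaces sharing a common lower-rank part into a single colour class for the reduction to a level-$(k-1)$ colouring to be well defined, and this is exactly the point at which the \emph{linear} structure of $\FF_q$, not merely a set-theoretic alphabet, has to be exploited. An alternative is to invoke the Graham--Rothschild theorem on $r$-colourings of $d$-parameter subsets of a large parameter set and translate into subspaces; but the naive dictionary is inadequate, since an ordinary parameter set realises only those affine subspaces in which each coordinate depends on a single parameter, so one would first have to upgrade to the ``linear parameter set'' formalism allowing an arbitrary matrix of $\FF_q$-coefficients --- and performing that upgrade is essentially the same amalgamation obstacle in different dress.
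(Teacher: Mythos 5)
First, a point of order: the paper does not prove Theorem \ref{th:vramsey} at all. It is imported as a black box from \cite{GLR72} and \cite{Sp79} and then used, together with the Hales--Jewett theorem, to derive the induced version. So your attempt has to stand on its own, and as written it is not a proof. Your base case $k=0$ is sound: a combinatorial $n$-subspace of $\FF_q^N$ in the Hales--Jewett sense is indeed an affine $n$-subspace, so the extended Hales--Jewett theorem (equivalently, the $n$-parameter set theorem of Graham and Rothschild) settles the point-colouring case. The gap is the inductive step, which is where the entire difficulty of the theorem lives and which you describe only programmatically. The assertion that, after \emph{a Hales--Jewett/product argument over the enlarged alphabet}, one reaches a subconfiguration on which the colour of a $k$-space depends only on its $(k-1)$-skeleton plus a bounded residue is precisely the content that has to be proved. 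You give no construction of the alphabet, no definition of the ``$(k-1)$-skeleton'' (a $k$-space has $\left|\sqbinom{K}{k-1}\right|$ many $(k-1)$-subspaces and no canonical transverse direction, so the proposed reduction to a level-$(k-1)$ colouring is not even well defined as stated), and no argument that the stabilised data constitutes a legitimate colouring of the $(k-1)$-subspaces of a smaller space to which the induction hypothesis applies. You candidly flag this stabilisation as ``the main obstacle''; but that obstacle \emph{is} the theorem.

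Two further points. The known arguments are structurally different from your single descent from $k$ to $k-1$: both Graham--Leeb--Rothschild and Spencer run a double induction, establishing the $k$-case for all $n$ by a secondary induction on $n$ (roughly, $N\rightarrow(n)^{k,\SP}_r$ is amalgamated from $N'\rightarrow(n-1)^{k,\SP}_{r'}$ together with the level-$(k-1)$ statement), and the affine and vector versions are interleaved as separate strands of that induction rather than being ``equivalent via standard projectivisation'' --- under projectivisation the vector $k$-subspaces contained in the hyperplane at infinity have no affine counterpart, so the affine statement in rank $N-1$ does not by itself yield the vector statement in rank $N$. Your own closing observation, that translating the Graham--Rothschild parameter-set theorem into subspaces requires upgrading to linear parameter sets and that this is ``the same amalgamation obstacle in different dress,'' is an accurate self-diagnosis. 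If the aim is to align with the paper, the correct move is simply to cite \cite{GLR72} or \cite{Sp79}; if the aim is to reprove the theorem, the inductive step must actually be carried out.
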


Note that Theorem \ref{th:vramsey} holds for both vector spaces and affine spaces, though the minimum value of $N_0$ may depend on the case. Our goal in this note is to give a short proof of an induced version of Theorem \ref{th:vramsey}. Such versions were considered for hypergraphs in \cites{NR77,NR82, NR83, AH78, VR16}. The following theorem is due to Pr\"{o}mel.

\begin{theorem}[\cite{Pr86}]\label{th:indvramsey}
 Let $0\leq k \leq n$ and $r$ be positive integers, let $C$ be an $n$-space and $\cF \subseteq \sqbinom{C}{k}$ be a family of $k$-subspaces of $C$. Then there exists $N$, an $N$-space $X$ and a family $\cH\subseteq \sqbinom{X}{k}$ of $k$-subspaces of $X$ such that for any coloring of $\cH$ with $r$ colors, there exists an $n$-space $U\in \sqbinom{X}{n}$ with $\sqbinom{U}{k}\cap \cH$  monochromatic and isomorphic to $\cF$ (see Definition \ref{def:isomorphic}). 
\end{theorem}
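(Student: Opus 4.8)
The strategy I would pursue is the \emph{partite construction} of Ne\v{s}et\v{r}il and R\"odl, which converts the ordinary Ramsey statement (Theorem \ref{th:vramsey}) into its induced refinement, using Theorem \ref{th:vramsey} as the only external input. The obstruction to a naive derivation is that membership of a $k$-space $W\in\sqbinom{X}{k}$ in $\cH$ cannot depend only on $W$ itself: the same $W$ may play the role of an $\cF$-edge in one copy of $C$ and of a non-edge in another. The role of the partite framework is to \emph{rigidify} the copies of $C$ so that the pattern $\cF$ is recorded unambiguously. Concretely, I would fix an enumeration $c_1,\dots,c_a$ of the points of $C$ (so $a=q^n$) and work with $a$-partite space systems: an ambient space whose point set is partitioned into classes $V_1,\dots,V_a$, together with a distinguished collection of \emph{transversal} copies of $C$, namely embeddings sending $c_i$ into class $V_i$. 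On such a system the edge set is obtained by placing, in each transversal copy, exactly the images of the spaces of $\cF$; because the class of a point determines its role, this assignment is consistent, and every transversal copy of $C$ then meets the edge set in precisely a copy of $\cF$.

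The construction has two engines. The first is a \textbf{partite lemma}: for any $a$-partite space system $\cP$ there is an $a$-partite space system $\cP'$ such that every $r$-coloring of the edges of $\cP'$ contains a monochromatic transversal copy of $\cP$. This is where Theorem \ref{th:vramsey} enters, the single-copy Ramsey statement following from $N\rightarrow(n)^{k,\SP}_r$ on a space of sufficiently large rank after pigeonholing over enough parallel copies of the relevant $k$-spaces. The second engine is the \textbf{amalgamation}: starting from a base space $B$ with $B\rightarrow(n)^{k,\SP}_r$ (again from Theorem \ref{th:vramsey}), I would blow it up into an $a$-partite system and enumerate its $n$-subspaces $U_1,\dots,U_M$. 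I would then build a chain $\cP_0\subseteq\cP_1\subseteq\cdots\subseteq\cP_M$ in which $\cP_j$ is obtained from $\cP_{j-1}$ by applying the partite lemma at the substructure indexed by $U_j$ and amalgamating the resulting copies of $\cP_{j-1}$ over $U_j$. A backward induction over $j$ shows that any $r$-coloring of $\cP_M$ contains a transversal copy of $B$ on which the coloring depends only on the underlying $k$-space of $B$; since $B\rightarrow(n)^{k,\SP}_r$, this induced coloring of the $k$-spaces of $B$ admits a monochromatic $n$-subspace, whose transversal copy is an induced, monochromatic copy of $\cF$. Taking $X$ to be the ambient space of $\cP_M$ and $\cH$ its edge set completes the proof.

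The main obstacle is carrying out the partite construction \emph{inside the category of spaces} rather than for abstract hypergraphs. Two points require care. First, the amalgamation must produce honest spaces: gluing copies of $\cP_{j-1}$ over a common subspace has to be realized as a genuine subspace configuration of some $\FF_q$-space, which I would arrange by embedding the parallel copies into a direct sum of sufficiently many blocks and checking that transversal copies of $C$ remain spaces of the correct rank. Second, and most delicately, the induced condition $\sqbinom{U}{k}\cap\cH\cong\cF$ must be preserved at every amalgamation step: one must verify that no \emph{unintended} $k$-space of a transversal copy falls into the edge set, which amounts to ensuring that distinct transversal copies share only the $k$-spaces forced by the amalgamation. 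Establishing this compatibility between the linear-algebraic structure of spaces and the combinatorial partite bookkeeping is the crux, and it is precisely here that the rigidity afforded by the partite classes, together with the Ramsey input of Theorem \ref{th:vramsey}, does the work.
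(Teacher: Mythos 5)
Your strategy is the partite amalgamation of Ne\v{s}et\v{r}il--R\"odl as adapted to spaces by Frankl, Graham, and R\"odl \cite{FGR87}, which is a genuinely different route from the one taken here: this note is written precisely to avoid the iterated amalgamation. Instead of building a chain $\cP_0 \subseteq \cdots \subseteq \cP_M$ indexed by the $n$-subspaces of a Ramsey base $B$, the paper performs what amounts to a single, global ``partite step'': it packs every edge of every copy $\cF_U$ (for $U \in \sqbinom{E}{n}$, where $E$ is an $N_0$-space with $N_0 \rightarrow (n)^{k,\SP}_r$) into its own private $N_0$-space $W_F \oplus W_{F^c}$, takes the $N_1$-fold fiber product $X$ of the resulting space $V$ over $E$, and applies the Hales--Jewett theorem once to the alphabet $\cY$ of private copies. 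The monochromatic combinatorial line yields a copy of $V$ inside $X$ on which the color of each member of $\cH$ depends only on its projection to $E$, and the ordinary Ramsey property of $E$ finishes the proof. What the amalgamation buys is a construction with more transparent local structure; what the paper's route buys is the elimination of the entire induction and of the delicate verification that gluing preserves both spacehood and inducedness.

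That said, as written your proposal has a genuine gap at its first engine. The partite lemma does not follow from Theorem \ref{th:vramsey} ``after pigeonholing over enough parallel copies'': Theorem \ref{th:vramsey} monochromatizes \emph{all} $k$-subspaces of some $n$-space, whereas in a partite system the coloring lives only on the designated edge set and the target copy of $\cP$ must be transversal and induced, so a single application of $N \rightarrow (n)^{k,\SP}_r$ plus pigeonhole does not produce it; already for hypergraphs the partite lemma is exactly where Hales--Jewett enters, and the same is true for spaces. Likewise, the amalgamation step in the category of spaces requires a concrete linear-algebraic realization --- one must check that the glued object embeds into an $\FF_q$-space so that distinct transversal copies intersect only in the prescribed subspace and no unintended $k$-space lands in the edge set --- and you flag this as ``the crux'' without resolving it. Both gaps are fillable (this is essentially the published proof in \cite{FGR87}), but filling them is where all the work lies, and the Hales--Jewett input you would need for the partite lemma is precisely the ingredient the paper uses directly, once, without any amalgamation.
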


The original proof of Theorem \ref{th:indvramsey} given in \cite{Pr86} is not simple. The proof was later simplified in \cite{FGR87} where the authors based their argument in a partite amalgamation construction, a technique that was used before to obtain similar results for graphs and hypergraphs \cites{NR82}. We further simplify the proof by avoiding the use of a partite amalgamation. In fact, our proof will be based only on Theorem \ref{th:vramsey} and the Hales--Jewett theorem \cite{HJ63}.

\section{Preliminaries}

The proof of Theorem \ref{th:indvramsey} is written in a way that holds both for the vector and affine space versions. In this section we define some common notation and state some common properties for both types of spaces. 

In a vector space $V$, a linear combination $\sum_{i = 1}^n \alpha_i v_i$ with $\sum_{i = 1}^n \alpha_i = 1$ is called an \textbf{affine combination}. A subset $U$ of $V$ is called an \textbf{affine space} if $U$ is closed under affine combinations. All affine subspaces of $V$ are of the form $U = v_0 + W$, where $v_0 \in V$ and $W$ is a linear subspace of $V$.

We now list a number of properties which are shared by vector and affine spaces. For the remainder of this note, we use general language to refer to both vector spaces and affine spaces simultaneously. For example, \textbf{space} will refer to either a vector space or an affine space, and \textbf{combination} will refer to a linear combination in a vector space context or an affine combination in an affine space context.

A subset $W$ of $U$ which is closed under combinations is called a \textbf{subspace} of $U$. A map $\phi : U \rightarrow V$ between spaces $U$ and $V$ which preserves combinations is called a \textbf{homomorphism}. A bijective homomorphism is called an \textbf{isomorphism}. A subset $B$ of a space $U$ generates a subspace $\lanran{B}$ of $U$ via combinations. If every element of $\lanran{B}$ has a unique representation as a combination of elements of $B$, we say $B$ is \textbf{independent}. We call $B$ a \textbf{basis} for $U$ if $B$ is independent and $\lanran{B} = U$. The size of a basis for $U$ is an invariant of $U$, called its \textbf{rank}, which we denote $\rank(U)$. Two spaces are isomorphic iff they have the same rank. We call a space of rank $k$ a \textbf{$k$-space}. We denote the set of all $k$-subspaces of $U$ by $\sqbinom{U}{k}$. Perhaps less standard is the following definition, pertinent to Theorem \ref{th:indvramsey}.
\begin{definition}\label{def:isomorphic}
	Given two spaces $V$ and $\tilde V$, we say that a family $\mathcal F \subseteq \sqbinom{V}{k}$ of $k$-subspaces of $V$ is \textbf{isomorphic} to a family $\tilde{\mathcal F} \subseteq \sqbinom{\tilde V}{k}$ of $k$-subspaces of $\tilde V$ if there exist subspaces $U \subseteq V$ and $\tilde U \subseteq \tilde V$ and an isomorphism $\varphi : U \to \tilde U$ such that $\mathcal F \subseteq \sqbinom{U}{k}$ and $\tilde{\mathcal F} \subseteq \sqbinom{\tilde U}{k}$, and $\varphi$ induces a bijection between $\mathcal F$ and $\tilde{\mathcal F}$.
\end{definition}

An important property for us is the following: If a space $U$ has basis $B$, then for every space $V$, every map $\phi : B \rightarrow V$ extends to a unique homomorphism $\tilde \phi: U \rightarrow V$. Similarly, for a collection $\{U_i\}_{i = 1}^\ell$ of spaces with respective finite disjoint bases $\{B_i\}_{i = 1}^\ell$ with $\bigcup_{i = 1}^\ell B_i$ independent, we define the \textbf{direct sum} $\bigoplus_{i = 1}^\ell U_i$ of $\{U_i\}_{i=1}^\ell$ to be the space $\lanran{\bigcup_{i = 1}^\ell B_i}$. The direct sum has the property that for every space $V$, every collection $\{\varphi_i : U_i \rightarrow V\}_{i = 1}^\ell$ of homomorphisms extends to a unique homomorphism $\varphi : \bigoplus_{i = 1}^\ell U_i \rightarrow V$ satisfying $\varphi \vert_{U_i} = \varphi_i$ for each $i = 1, \ldots, \ell$.

The last property that we mention is that any independent set $B_0$ in $U$ can be extended to a basis $B = B_0 \sqcup B_1$ for $U$, and in that case, $U = \lanran{B_0} \oplus \lanran{B_1}$. Consequently, every subspace $W$ of $U$ has a complementary subspace $W^c$ of $U$ such that $U = W \oplus W^c$.

\section{Proof of Theorem \ref{th:indvramsey}}

    
    Let $N_0:=N_0(k,n,r)$ be the integer given by Theorem \ref{th:vramsey} such that $N\rightarrow (n)^{k,\SP}_r$ for $N \geq N_0$, and let $E$ be an $N_0$-space. For each $n$-space $U \in \sqbinom{E}{n}$, consider an isomorphism $\phi_U:C\rightarrow U$. The isomorphism $\phi_U$ induces a copy $\cF_U$ of $\cF$ in $U$ given by
    \begin{align*}
        \cF_U:=\{\phi_U(F):\: F\in \cF\}.
    \end{align*}
    For each $k$-space $F \in \cF_U$, there exists an $(N_0 - k)$-subspace $F^{c} \subseteq E$ such that $E=F\oplus F^{c}$. We now consider a space $V$ of rank $\left|\sqbinom{E}{n}\right|(n + (N_0 - k)|\cF|)$ with basis $B_V$, and we partition $B_V$ as
		\[B_V = \bigcup_{U \in \sqbinom{E}{n}} \left(B_U \cup \bigcup_{F \in \mathcal \cF_U} B_{F^c}\right),\]
		where each $B_U$ is of size $n$, and each $B_{F^c}$ is of size $N_0 - k$. Now for each $U \in \sqbinom{E}{n}$ and for each $F \in \cF_U$, we have copies $W_U := \lanran{B_U}$ of $U$ and $W_{F^c} := \lanran{B_{F^c}}$ of $F^c$ such that
    \begin{align*}
        V = \bigoplus_{U \in \sqbinom{E}{n}} \left(W_U \oplus \bigoplus_{F \in \mathcal \cF_U} W_{F^{c}}\right).
    \end{align*}
    
    Note that by construction there exists an isomorphism $\pi_U : W_U \rightarrow U$ for each $n$-subspace $U$ of $E$. The isomorphism $\pi_U$ creates a copy $\cF_{W_U}$ of $\cF_U$ in $W_U$ consisting of those $k$-subspaces $W_F := \pi_U^{-1}(F)$ which are mapped to some $F \in \cF_U$. Let $\cG$ be the collection of $k$-subspaces of $V$ given by 
    \begin{align*}
        \cG:=\bigcup_{U\in \sqbinom{E}{n}}\bigcup_{F\in\cF_U}\sqbinom{W_F\oplus W_{F^{c}}}{k},
    \end{align*}
    i.e. $\cG$ consists of all of the copies $\cF_{W_U}$ of $\cF$, with each $W_F \in \cF_{W_U}$ completed to its own private copy $\sqbinom{W_F \oplus W_{F^c}}{k}$ of $\sqbinom{E}{k}$.

	 Consider the homomorphism $\pi:V\rightarrow E$ defined in the following way. For each $U \in \sqbinom{E}{n}$, we have the isomorphism $\pi_U : W_U \to U \subseteq E$. Similarly, for each $F \in \cF_U$, we have an isomorphism $\pi_{F^c} : W_{F^c}\rightarrow F^c \subseteq E$. We define $\pi : V \rightarrow E$ to be the unique homomorphic extension of the maps $\pi_U$ and $\pi_{F^c}$ to $V$; that is, $\pi \vert_{W_U} = \pi_U$ for each $U \in \sqbinom{E}{n}$, and $\pi \vert_{W_{F^c}} = \pi_{F^c}$ for each $F \in \cF_U$.
	 Let
     \begin{align*}
         \cY=\left\{W_F\oplus W_{F^c}:\: U\in \sqbinom{E}{n},\, F\in \cF_U\right\}
     \end{align*}
    be the set of $N_0$-spaces covering $\cG$. Note that by construction $\pi$ induces an isomorphism from each $Y\in \cY$ to $E$. In particular, it identifies each $k$-space in $\cG$ with a $k$-subspace of $E$. Thus one can view $\pi$ as a projection of the collection $\cY$ onto the $N_0$-space $E$.
    
    We will apply the Hales--Jewett theorem \cite{HJ63} to the alphabet $\cY$. For that, we first establish some notation. Given an alphabet $A=\{a_1,\ldots,a_t\}$, we say that an element $S=(s_1,\ldots,s_N)$ of $A^N$ is a \textbf{word} of length $N$. A collection $\{S_1, S_2, \ldots, S_t\}$ of $t$ words of length $N$ with $S_i=(s_{i,1},\ldots,s_{i,N})$ is a \textbf{combinatorial line} if there exists a partition $[N]=I_M\cup I_F$, $I_M\neq \emptyset$, and a sequence $\{b_j\}_{j\in I_F}$ of elements of $A$ such that
    \begin{align*}
        s_{i,j}=\begin{cases}
        a_i, &\quad \text{ for $j\in I_M$}\\
        b_j, &\quad \text{ for $j\in I_F$}
        \end{cases}
    \end{align*}
    for $1\leq i \leq t$ and $1\leq j \leq N$. We will refer to $I_M$ as the moving part and $I_F$ as the fixed part. The Hales--Jewett theorem asserts that given integers $t,\ell\geq 1$, there exists integer $N:=N(t,\ell)$ such that the following holds. For any alphabet $A$ of size $t$ and any $\ell$-coloring of the set of words $A^N$, there exists a monochromatic combinatorial line $\cL\subseteq A^{N}$. 
    Let $N_1$ be the integer obtained by the Hales--Jewett theorem for $t=|\cY|$ and $\ell=r^m$, where $m=\left|\sqbinom{E}{k}\right|$.
    
    We will now construct our collection of $k$-spaces $\cH$. Let $X \subseteq V^{N_1}$ be the set defined by
    \begin{align*}
        X=\left\{(x_1,\ldots,x_{N_1})\in V^{N_1}:\: \pi(x_1)=\pi(x_2)=\ldots=\pi(x_{N_1})\right\},
    \end{align*}
    i.e., $X$ is the set of points in $V^{N_1}$ with same $\pi$-projection over $E$. It is not difficult to check that since $\pi$ is a homomorphism, the set $X$ is a space of rank $N\leq\rank(V^{N_1})$. We extend the map $\pi: V\rightarrow E$ to a map $\tilde{\pi}:X\rightarrow E$ by taking as image the common value of $\pi$ through all coordinates, i.e., 
    \begin{align*}
    \tilde{\pi}(x_1,\ldots,x_n)=\pi(x_1).    
    \end{align*}
    
    For any $p \leq N_0$, given $p$-subspaces $A_1,\ldots, A_{N_1} \subseteq V$ with $\pi(A_1) = \cdots = \pi(A_{N_1})$ and $\pi$ injective on each $A_i$, we denote
    \begin{align*}
    (A_1,\ldots,A_{N_1}):=\{(x_1,\ldots,x_{N_1}) \in X:\: x_i\in A_i,\, \pi(x_1) = \cdots = \pi(x_{N_1})\}   
    \end{align*}
    as the $p$-subspace of $X$ with elements in $A_1\times\ldots\times A_{N_1}$. Let $\cH$ be the collection of $k$-subspaces of $X$ given as follows:
    \begin{align*}
        \cH=\left\{(G_1,\ldots,G_{N_1}):\: G_i\in \cG, \, \pi(G_1)=\pi(G_2)=\ldots=\pi(G_{N_1})\right\}.
    \end{align*}
    We claim that for any $r$-coloring of $\cH$, there exists a $n$-space $U$ such that $\sqbinom{U}{k}\cap \cH$ is monochromatic and isomorphic to $\cF$.
    
    To check that, we need some preparation. Note that every $N_1$-tuple $(Y_1,\ldots,Y_{N_1})\in \cY^{N_1}$ is an $N_0$-subspace of $X$ since $\pi(Y_1)=\ldots=\pi(Y_{N_1})=E$. Hence, a combinatorial line of $\cY^{N_1}$ is a collection of $|\cY|$ distinct $N_0$-subspaces of $X$. In fact, the next proposition shows that this collection is isomorphic to $\cY$ as a collection of $N_0$-spaces.
    
    \begin{proposition}\label{prop:prod}
    Let $\cZ\subseteq \cY^{N_1}$ be a combinatorial line of $\cY^{N_1}$. Then there exists a space $\tilde{V}\subseteq X$ and an isomorphism $\phi: \tilde{V}\rightarrow V$ such that $\cZ$ is a collection of $N_0$-spaces of $\tilde{V}$ and $\phi$ induces a bijection between elements of $\cZ$ and $\cY$. Moreover, the map $\phi$ also satisfies $\pi \circ \phi = \tilde \pi \vert_{\tilde V}$ and induces a bijection between the $k$-spaces of $\sqbinom{\tilde{V}}{k}\cap \cH$ and the $k$-spaces of $\cG$, i.e., $\sqbinom{\tilde{V}}{k}\cap \cH\cong \cG$. 
    \end{proposition}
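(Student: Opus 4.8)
The plan is to unwind the definition of a combinatorial line and realize it as a direct-sum decomposition of an explicit subspace $\tilde V \subseteq X$. Let $\cZ = \{S_1,\ldots,S_{|\cY|}\}$ be the combinatorial line, with moving part $I_M$ and fixed part $I_F$ partitioning $[N_1]$, and with the fixed coordinates $\{b_j\}_{j\in I_F}$, where each $b_j \in \cY$ is an $N_0$-space of $V$. Writing $\cY = \{Y_1,\ldots,Y_{|\cY|}\}$, the word $S_i$ has $Y_i$ in every moving coordinate $j\in I_M$ and $b_j$ in each fixed coordinate $j \in I_F$. First I would fix a base point: since all the $Y_i$ and all the $b_j$ are $N_0$-subspaces of $V$ mapped isomorphically onto $E$ by $\pi$, for a fixed $e_0\in E$ there is a unique point in each of these spaces projecting to $e_0$; subtracting the corresponding base point from $\tilde V$ lets me treat everything linearly (in the vector-space case this is automatic, and in the affine case it is what makes the construction uniform).

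Next I would define $\tilde V$ as the span inside $X$ of the ``diagonal moving'' copy of $V$ together with nothing else, and exhibit the isomorphism $\phi$ by its action on a basis. Concretely, recall the decomposition $V = \bigoplus_{U}\big(W_U \oplus \bigoplus_{F\in\cF_U} W_{F^c}\big)$ and the projection $\pi$. For $v \in V$, define $\phi^{-1}(v) = (x_1,\ldots,x_{N_1})$ by putting $x_j = v$ for $j\in I_M$, and for $j\in I_F$ setting $x_j$ to be the unique element of the space $b_j$ with $\pi(x_j) = \pi(v)$; this is well-defined precisely because $\pi\vert_{b_j}\colon b_j \to E$ is an isomorphism. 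This tuple lies in $X$ since all coordinates share the projection $\pi(v)$, and the map $v\mapsto (x_1,\ldots,x_{N_1})$ is linear (it is built from linear maps in each coordinate) and injective. I would then let $\tilde V$ be its image and $\phi$ its inverse, and check $\rank \tilde V = \rank V = N$ so that $\phi$ is an isomorphism. The identity $\pi\circ\phi = \tilde\pi\vert_{\tilde V}$ is immediate from $\tilde\pi(x_1,\ldots,x_{N_1}) = \pi(x_1)$ and the choice $x_{j} = v$ for $j \in I_M \ne \emptyset$.

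With $\phi$ in hand, I would verify the two bijection claims. For $\cY$: the $i$-th word $S_i$ of $\cZ$ is exactly $\phi^{-1}(Y_i)$, because in the moving coordinates $S_i$ carries $Y_i$, and in each fixed coordinate $j$ it carries $b_j$, which matches the rule defining $\phi^{-1}$ applied to the subspace $Y_i$ (the fixed coordinate of $\phi^{-1}(Y_i)$ is the set of points of $b_j$ projecting into $\pi(Y_i) = E$, i.e. all of $b_j$). Hence $\phi$ carries the $N_0$-spaces of $\cZ$ bijectively onto $\cY$. For $\cG$: since $\phi$ is a $\pi$-compatible isomorphism $\tilde V \to V$, it sends a $k$-subspace $H$ of $\tilde V$ into $\cH$ iff its image $\phi(H)$ is a member of $\cG$ with the matching projection condition; unraveling the definition of $\cH$ as tuples $(G_1,\ldots,G_{N_1})$ with $G_i\in\cG$ and equal projections, one sees that the elements of $\sqbinom{\tilde V}{k}\cap\cH$ are exactly the $\phi^{-1}$-images of the elements of $\cG$, giving $\sqbinom{\tilde V}{k}\cap\cH \cong \cG$.

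The main obstacle I expect is the bookkeeping in the fixed coordinates: one must check that the ``diagonal with prescribed fixed fibers'' subspace really has full rank $N$ and that $\phi$ is genuinely a \emph{linear} isomorphism rather than just a bijection of underlying sets, which in the affine case requires care with the base point so that combinations are preserved. The other delicate point is confirming that $\sqbinom{\tilde V}{k}\cap\cH$ does not accidentally contain $k$-spaces of $\tilde V$ lying outside the $\phi$-preimage of $\cG$ — this is where the definition of $\cH$ as tuples of members of $\cG$ with equal projection, combined with $\pi\circ\phi = \tilde\pi\vert_{\tilde V}$, must be used to pin down the correspondence exactly. Everything else is a routine unwinding of the direct-sum and Hales--Jewett definitions.
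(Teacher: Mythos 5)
Your construction is essentially the paper's: you realize $\tilde V$ as the ``partial diagonal'' subspace of $X$ determined by the line (equal entries on the moving coordinates, the unique $\pi$-compatible points of the fixed spaces $b_j$ on the fixed coordinates), and your $\phi$ is the projection onto a moving coordinate, just defined via its inverse $v \mapsto (x_1,\ldots,x_{N_1})$ rather than directly; the verification of both bijections and of $\pi\circ\phi = \tilde\pi\vert_{\tilde V}$ proceeds exactly as in the paper. The points you flag as delicate (affine base points, and ruling out extra elements of $\sqbinom{\tilde{V}}{k}\cap \cH$) are handled correctly by the observations you already make, so the proposal is correct and matches the paper's proof.
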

    
    \begin{proof}
    Suppose without loss of generality that the fixed part of $\cZ$ consists of the first $f$ indices and the moving part consists of the remaining $N_1-f$ indices, i.e.,
    \begin{align*}
        \cZ=\left\{(Z_1,\ldots,Z_f,Y,\ldots,Y):\: Y\in \cY \right\}
    \end{align*} 
    for fixed elements $Z_1, \ldots, Z_f$ of $\cY$. Define $\tilde{V}\subseteq X$ by
    \begin{align*}
        \tilde{V}=\left\{(z_1,\ldots,z_f,y,\ldots,y) \in X:\: z_i\in Z_i,\, y\in V,\, \pi(z_i) = \pi(y)\right\},
    \end{align*}
     and the map $\phi:\tilde{V}\rightarrow V$ by 
     \begin{align*}
     \phi(z_1,\ldots,z_f,y,\ldots,y)=y.
     \end{align*}
     Clearly $\tilde{V}$ is a space and $\phi$ is a homomorphism. Note that since $\pi$ is an isomorphism between each $Z_i$ and $E$, given $y \in V$, there exists a unique $z_i\in Z_i$ such that $\pi(z_i)=y$. Hence, $\phi$ is an isomorphism. Also, by construction, $\phi$ sends the $N_0$-space $(Z_1,\ldots,Z_f,Y,\ldots,Y)\in \cZ$ to $Y\in \cY$, giving our desired bijection.
     
		 Next, note that 
		 \[\pi(\phi(z_1, \ldots, z_f, y, \ldots, y)) = \pi(y) = \tilde \pi(z_1, \ldots, z_f, y, \ldots, y)\]
		 for every $(z_1, \ldots, z_f, y, \ldots, y) \in \tilde V$.
		 
		 Now for every $(G_1, \ldots, G_{N_1}) \in \sqbinom{\tilde V}{k} \cap \cH$, we have that $\phi(G_1, \ldots, G_{N_1}) = G_{N_1} \in \cG$ from the definition of $\phi$. Conversely, we can write $\cG = \bigcup_{Y \in \cY} \sqbinom{Y}{k}$, so each $G \in \cG$ is contained in some $Y \in \cY$. Since $\pi$ maps each of $Y, Z_1, \ldots, Z_f \in \cY$ isomorphically to $E$, there exists a unique $G_i$ in each $\sqbinom{Z_i}{k}$ such that $\pi(G_i) = \pi(G)$. Now $(G_1, \ldots, G_f, G, \ldots, G)$ is the unique element of $\sqbinom{\tilde V}{k} \cap \cH$ with $\phi(G_1, \ldots, G_f, G, \ldots, G) = G$. This establishes $\sqbinom{\tilde{V}}{k}\cap \cH\cong \cG$. 
    \end{proof}
	
	 
 	Now consider an $r$-coloring $c: \cH \rightarrow [r]$. Note that since every $G \in \cG$ is a $k$-subspace of some $Y \in \cY$, and the $k$-subspaces of each $(Y_1, \ldots, Y_{N_1}) \in \cY^{N_1}$ are all of the form $(G_1, \ldots, G_{N_1})$, with each $G_i \in \sqbinom{Y_i}{k} \subseteq \cG$, we have
 	\[\cH = \bigcup_{(Y_1, \ldots, Y_{N_1}) \in \cY^{N_1}} \sqbinom{(Y_1, \ldots, Y_{N_1})}{k}.\]
	Therefore, for each $(Y_1, \ldots, Y_{N_1}) \in \cY^{N_1}$, $c$ induces a color pattern on $\sqbinom{(Y_1, \ldots, Y_{N_1})}{k}$, and thus, in view of the isomorphism $\tilde \pi \vert_{(Y_1, \ldots, Y_{N_1})} : (Y_1, \ldots, Y_N) \to E$, also on $\sqbinom{E}{k}$.
 	Let $c_P : \cY^{N_1} \rightarrow [r]^{\sqbinom{E}{k}}$ be the coloring function from the elements of $\cY^{N_1}$ to the color patterns of $\sqbinom{E}{k}$ they induce.
	 
	By our choice of $N_1$ and the Hales--Jewett theorem, there exists a monochromatic combinatorial line $\cZ\subseteq \cY^{N_1}$ with respect to $c_P$. Let $d: \sqbinom{E}{k}\rightarrow [r]$ be the color pattern of each $(Z_1, \ldots, Z_{N_1}) \in \cZ$. Then for every $(Z_1, \ldots, Z_{N_1}) \in \cZ$ and every $(G_1, \ldots, G_{N_1}) \in \sqbinom{(Z_1, \ldots, Z_{N_1})}{k}$, we have
	\[c(G_1, \ldots, G_{N_1}) = d(\tilde \pi (G_1, \ldots, G_{N_1}));\]
	i.e. the coloring of $\cH$ within $\cZ$ is determined by its $\tilde \pi$-projection over $E$. Let $\tilde V$ and $\phi : \tilde V \to V$ be as in Proposition \ref{prop:prod}. Then $\tilde \cG := \phi^{-1}(\cG) = \sqbinom{\tilde V}{k} \cap \cH$ is an induced copy of $\cG$ in $\cH$ with $c(\tilde G) = d(\tilde \pi (\tilde G))$ for every $\tilde G \in \tilde{\cG}$.
	
	Since $d$ is an $r$-coloring of $\sqbinom{E}{k}$, by our choice of $N_0$, there exists a $n$-space $U \in \sqbinom{E}{n}$ such that $\sqbinom{U}{k}$ is monochromatic, say in color $\alpha$. In particular, this implies that $\cF_U \subseteq \sqbinom{U}{k}$ is monochromatic in $\alpha$. Now for $\tilde G \in \varphi^{-1}(\cF_{W_U}) \subseteq \tilde \cG$, we have
	\[\tilde \pi(\tilde G) = \pi(\phi(\tilde G)) \in \cF_U,\]
	and therefore $c(\tilde G) = d(\tilde \pi(\tilde G)) = \alpha$. Thus $\phi^{-1}(\cF_{W_U})$ is a monochromatic induced copy of $\cF$ in $\cH$.
	
\bibliography{literature}

\end{document}